\renewcommand*{\dot}[1]{%
  \accentset{\mbox{\bfseries .}}{#1}}
\renewcommand{\geq}{\geqslant}
\renewcommand{\leq}{\leqslant}
\newtheorem{theorem}{Theorem}[section]
\newtheorem{lemma}[theorem]{Lemma}
\newtheorem{corollary}[theorem]{Corollary}
\newtheorem*{main-theorem}{Main Theorem}
\newtheorem*{remark*}{Remark}
\numberwithin{equation}{section}
\title[Norm inflation for fractional KdV equations]
{Norm inflation for equations of KdV type \\ with fractional dispersion}
\author[Hur]{Vera~Mikyoung~Hur}
\address{Department of Mathematics, University of Illinois at Urbana-Champaign, Urbana, IL 61801 USA}
\email{verahur@math.uiuc.edu}
\date{\today}
\subjclass[2010]{35Q53, 35R11, 76B15, 35B30}
\keywords{ill-posedness; norm inflation; Korteweg-de Vries; Whitham; fractional dispersion}
\begin{document}

\maketitle

\begin{abstract}
We demonstrate norm inflation for nonlinear nonlocal equations, which extend the Korteweg-de Vries equation to permit fractional dispersion, in the periodic and non-periodic settings. That is, an initial datum is smooth and arbitrarily small in a Sobolev space, but the solution becomes arbitrarily large in the Sobolev space after an arbitrarily short time. 
\end{abstract}

\section{Introduction}\label{sec:intro}

We address the ill-posedness of the Cauchy problem associated with equations of Korteweg-de Vries type:
\begin{equation}\label{E:main}
\partial_tu+|\partial_x|^\alpha \partial_xu+u\partial_xu=0
\end{equation}
and 
\begin{equation}\label{def:u0}
u(x,0)=u_0(x).
\end{equation} 
Here $t\in\mathbb{R}$ denotes the temporal variable, $x\in\mathbb{R}$ or $\mathbb{T}=\mathbb{R}/2\pi\mathbb{Z}$ is the spatial variable, $u=u(x,t)$ is real valued, and $\alpha \geq -1$; $\partial$ means partial differentiation and $|\partial_x|=\sqrt{-\partial_x^2}$ is a Fourier multiplier operator, defined via its symbol as
\[
\widehat{|\partial_x|f}(\xi)=|\xi|\widehat{f}(\xi).
\]

In the case of $\alpha=2$, \eqref{E:main} is the well-known Korteweg-de Vries equation, which was put forward in \cite{Boussinesq} and \cite{KdV} to model surface water waves of small amplitude and long wavelength in the finite depth. In the case of $\alpha=1$, \eqref{E:main} is the Benjamin-Ono equation (see \cite{Benjamin-BO}, for instance), and in the case of $\alpha=0$, it is the inviscid Burgers equation. Moreover, in the case of $\alpha=-1/2$, the author \cite{Hur-blowup} observed that \eqref{E:main} shares the dispersion relation and scaling symmetry in common with water waves in the infinite depth. Last but not least, in the case of $\alpha=-1$, \eqref{E:main} was proposed in \cite{BH} to model nonlinear waves whose linearized frequency is nonzero but constant.  

Furthermore, \eqref{E:main} belongs to the family of nonlinear dispersive equations of the form (see \cite{Whitham}, for instance)
\begin{equation}\label{E:whitham}
\partial_tu+\mathscr{M}\partial_xu+u\partial_xu=0,
\end{equation}
where $\mathscr{M}$ is a Fourier multiplier operator, defined via its symbol $m$, say. Here we assume that $m$ is real valued. Note that \eqref{E:whitham} is {\em nonlocal} unless $m$ is a polynomial of $\xi^2$. Examples include the Benjamin-Ono equation and the intermediate long wave equation (see \cite{Joseph-ILW}, for instance), for which $m(\xi)=|\xi|$ and $\xi\coth\xi$, respectively. Whitham (see \cite{Whitham}, for instance) proposed $m(\xi)=\sqrt{\tanh\xi/\xi}$ --- namely, the phase speed for surface water waves in the finite depth --- as an improvement\footnote{
Since 
\[
\sqrt{\frac{\tanh\xi}{\xi}}=1-\frac16\xi^2+O(\xi^4)\qquad\text{for $\xi\ll1$},
\]
one may regard the Korteweg-de Vries equation (after normalization of parameters) as to approximate the dispersion of the Whitham equation and, hence, water waves for low frequencies. As a matter of fact, for physically relevant initial data, the solutions of the Korteweg-de Vries equation and the Whitham equation differ from those of the water wave problem by higher order terms during a relevant time interval. But the Korteweg-de Vries equation poorly approximates the dispersion of water waves for high frequencies.} over the Korteweg-de Vries equation for high frequencies. 

In an effort to understand the competition of dispersion and nonlinearity, it is tempting, in regard to many theoretical aspects, to shift attention from \eqref{E:main} or \eqref{E:whitham}, where the nonlinearity is fixed and the dispersion varies from equation to equation, to 
\begin{equation}\label{E:gKdV}
\partial_tu+\partial_x^3u+u^p\partial_xu=0\qquad\text{(for a suitable $p$)},
\end{equation}
where the dispersion is fixed, represented by a local operator, and the nonlinearity is of variable strength, depending on $p$. The well-posedness for \eqref{E:gKdV} is worked out nearly completely. But $p$ other than $1$ or $2$ seems unlikely in practice. 

Note that \eqref{E:main} possesses three conserved quantities 
\begin{equation}\label{E:conservation}
\int\frac12u|\partial_x|^\alpha u+\frac16u^3, \qquad \int u^2, \quad\text{and}\quad \int u,
\end{equation}
which correspond to the Hamiltonian, the momentum, and the mass, respectively. For $\alpha\geq1/3$, it follows from a Sobolev inequality that the Hamiltonian is equivalent to $\|u\|_{H^{\alpha/2}}^2$. Note that \eqref{E:main} remains invariant under 
\begin{equation}\label{E:scaling}
u(x,t)\mapsto \lambda^\alpha u(\lambda x, \lambda^{\alpha+1}t)
\end{equation}
for any $\lambda>0$, whence it is $\dot{H}^{1/2-\alpha}$ critical. Here and throughout, $H^s$ and $\dot{H}^s$ denote the inhomogeneous and homogeneous, $L^2$-based Sobolev spaces. Moreover, \eqref{E:main} remains invariant under
\begin{equation}\label{E:galilean}
u(x,t)\mapsto u(x-\omega t, t)+\omega
\end{equation}
for any $\omega\in\mathbb{R}$. 

\subsection*{Notation}

We use $C\gg 1$ and $0<c\ll 1$ for various large and small constants, which may vary from line to line. We use $A\lesssim B$ or $B\gtrsim A$ to mean that $A\leq CB$ for some constant $C>0$, and $A\sim B$ to mean that $A\lesssim B\lesssim A$. 

\subsection*{Earlier results}

We say that \eqref{E:main}-\eqref{def:u0} is locally {\em well-posed} in $H^s(\mathbb{X})$, where $\mathbb{X}=\mathbb{R}$ or $\mathbb{T}$, if for any $u_0\in H^s(\mathbb{X})$, a solution of \eqref{E:main}-\eqref{def:u0} exists in $C([-t_*,t_*]; H^s(\mathbb{X}))$ for some $t_*>0$ (in the sense of distributions), it is unique in a space continuously embedded in $C([-t_*,t_*]; H^s(\mathbb{X}))$, and the map that takes an initial datum to the solution is continuous from $H^s(\mathbb{X})$ to $C([-t_*,t_*]; H^s(\mathbb{X}))$. We say that it is {\em ill-posed} otherwise, and globally well-posed if $t_*=+\infty$.

For any $\alpha\geq-1$, it follows from an a priori bound and a compactness argument that \eqref{E:main}-\eqref{def:u0} is locally well-posed in $H^s(\mathbb{X})$, $\mathbb{X}=\mathbb{R}$ or $\mathbb{T}$, provided that $s>3/2$, and $u\in C([-t_*,t_*]; H^s(\mathbb{X}))$ for some $t_*>0$; see \cite{Kato}, for instance, for details. Moreover, $t_*\gtrsim \|u_0\|_{H^s(\mathbb{X})}^{-1}$. If $u_0\in H^{s'}(\mathbb{X})$ for some $s'>s$, in addition, then $u\in C([-t_*,t_*];H^{s'}(\mathbb{X}))$. But the proof in \cite{Kato} does not improve the smoothness of the datum-to-solution map. As a matter of fact, in the case of $\alpha=0$ --- namely, the inviscid Burgers equation --- the datum-to-solution map is {\em not} uniformly continuous in $H^s(\mathbb{R})$ for any $s>3/2$; see \cite{Tzvetkov2004}, for instance, for details.

In the case of $\alpha=2$ --- namely, the Korteweg-de Vries equation --- it follows from techniques in nonlinear dispersive equations and a fixed point argument that \eqref{E:main}-\eqref{def:u0} is globally well-posed in $H^s(\mathbb{R})$ for $s\geq -3/4$ and in $H^s(\mathbb{T})$ for $s\geq -1/2$ (see \cite{CCKTT2003}, for instance). Furthermore, the datum-to-solution map is real analytic. But Christ, Colliander, and Tao \cite{CCT2003} observed that the datum-to-solution map would fail to be uniformly continuous in $H^s(\mathbb{R})$ for $-1\leq s<-3/4$ and in $H^s(\mathbb{T})$ for $-2<s<-1/2$. 

For $1\leq\alpha<2$, \eqref{E:main}-\eqref{def:u0} is globally well-posed in $L^2(\mathbb{R})$ (see \cite{HIKK2010}, for instance) and $H^{\alpha/2}(\mathbb{X})$, $\mathbb{X}=\mathbb{R}$ or $\mathbb{T}$ (see \cite{MV2015}, for instance). By the way, the $H^{\alpha/2}(\mathbb{X})$ norm is equivalent to the Hamiltonian (see \eqref{E:conservation}). 
For $0<\alpha<1$, \eqref{E:main}-\eqref{def:u0} is locally well-posed in $H^s(\mathbb{R})$ for $s>3/2-3\alpha/8$ (see \cite{LPS2014}, for instance). But the proofs rely on a compactness argument, whence they may not improve the smoothness of the datum-to-solution map. 

As a matter of fact, for $0\leq\alpha<2$, Molinet, Saut, and Tzvetkov \cite{MST2001} studied interactions of low and high frequency modes, and they proved that the datum-to-solution map for \eqref{E:main}-\eqref{def:u0} would fail to be twice continuously differentiable in $H^s(\mathbb{R})$ for any $s\in\mathbb{R}$. 
In the case of $\alpha=2$, the same result holds for $s<-3/4$ (see \cite{Tzvetkov1999}, for instance). 

Furthermore, in the case of $\alpha=1$, Koch and Tzvetkov \cite{KT2005} exploited \eqref{E:galilean} to construct approximate solutions, and they proved that the datum-to-solution map for \eqref{E:main}-\eqref{def:u0} would fail to be uniformly continuous in $H^s(\mathbb{R})$ whenever $s>0$. For $\alpha\geq0$, Molinet \cite{Molinet2007} observed that the same result would hold in $H^s(\mathbb{T})$ for $s>0$. The same result holds for $0\leq\alpha<2$ in the non-periodic setting, for $\alpha\geq -1$ in the periodic setting, and for \eqref{E:whitham} for a broad range of the dispersion symbol (see \cite{Arnesen}, for instance). 

Moreover, for $1/3\leq\alpha\leq 1/2$, one may manipulate solitary waves to argue that the datum-to-solution map for \eqref{E:main}-\eqref{def:u0} is not uniformly continuous in $H^{1/2-\alpha}(\mathbb{R})$; see \cite{LPS2014}, for instance, for details. By the way, \eqref{E:main} is $\dot{H}^{1/2-\alpha}$ critical.

For $\alpha<0$, the well-posedness of \eqref{E:main}-\eqref{def:u0} or, rather, the lack thereof seems not adequately understood, which is the subject of investigation here. Nevertheless, for $-1<\alpha<0$, the author \cite{Hur-blowup} (see also \cite{CCG2010}) established finite time blowup in $C^{1+\gamma}(\mathbb{R})$, $0<\gamma<1$. 
For $-1<\alpha<-1/3$, she \cite{HT2014, Hur-breaking} promoted the result to wave breaking. 
Specifically, if $-\inf_{x\in\mathbb{R}}u_0'(x)$ is sufficiently large (and $u_0$ satisfies some technical assumptions) then the solution of \eqref{E:main}-\eqref{def:u0} exhibits that
\[|u(x,t)|<\infty\qquad\text{for all $x\in\mathbb{R}$}\quad\text{for all $t\in [0,t_*)$}\]
but
\[\inf_{x\in\mathbb{R}}\partial_xu(x,t)\to -\infty\qquad\text{as $t\to t_*-$}\]
for some $t_*>0$. Moreover, 
\begin{equation}\label{E:breaking time}
-\frac{1}{1+\epsilon}\frac{1}{\inf_{x\in\mathbb{R}}u_0'(x)}<t_*<-\frac{1}{(1-\epsilon)^2}\frac{1}{\inf_{x\in\mathbb{R}}u_0'(x)}
\end{equation}
for $\epsilon>0$ sufficiently small.

\subsection*{Main results}

Here we take matters further and demonstrate the {\em norm inflation} for \eqref{E:main}-\eqref{def:u0} in the periodic and non-periodic settings. Specifically, we show that an initial datum is smooth and arbitrarily small in $H^s(\mathbb{X})$, where $\mathbb{X}=\mathbb{R}$ or $\mathbb{T}$, but the solution of \eqref{E:main}-\eqref{def:u0} becomes arbitrarily large in $H^s(\mathbb{X})$ after an arbitrarily short time. This is a more drastic form of ill-posedness than the failure of uniform continuity of the datum-to-solution map, and it implies that the datum-to-solution map for \eqref{E:main}-\eqref{def:u0} is discontinuous at the origin in the $H^s(\mathbb{X})$ topology. 

\begin{theorem}[Norm inflation in $\mathbb{R}$]\label{thm:R}
Let $-1\leq \alpha<-1/3$, and assume that $5/6<s<1/2-\alpha$. For any $\epsilon>0$, there exist $u_0$ in the Schwartz class, $t$ in the interval $(0,\epsilon)$, and the solution $u$ of \eqref{E:main}-\eqref{def:u0} such that 
\[
\|u_0\|_{H^s(\mathbb{R})}<\epsilon \quad\text{but}\quad \|u(\cdot,t)\|_{H^s(\mathbb{R})}>\epsilon^{-1}.
\]
\end{theorem}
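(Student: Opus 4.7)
The strategy combines three ingredients: the scaling symmetry \eqref{E:scaling}, which shrinks the $\dot{H}^s(\mathbb{R})$ norm of the initial datum whenever $s<1/2-\alpha$; the wave-breaking theorem of \cite{HT2014, Hur-breaking}, which produces a slope singularity in finite time for $-1<\alpha<-1/3$; and the fact that the generic Burgers-type cube-root cusp $(x-x_0)^{1/3}$ lies in $H^s(\mathbb{R})$ exactly for $s<5/6$.

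First I would fix a Schwartz function $\phi$ satisfying the hypotheses of the wave-breaking result, with $\inf_x\phi'(x)=-1$ attained at a single generic point, and set $u_{0,\lambda}(x)=\lambda^\alpha\phi(\lambda x)$ for $\lambda\gg 1$. A direct computation gives
\[
\|u_{0,\lambda}\|_{\dot{H}^s}^2=\lambda^{2\alpha+2s-1}\|\phi\|_{\dot{H}^s}^2\qquad\text{and}\qquad\inf_x u_{0,\lambda}'(x)=-\lambda^{\alpha+1}.
\]
Since $s<1/2-\alpha$ the first exponent is negative and the $\dot{H}^s$ norm tends to $0$; and since $\alpha>-1$ the slope tends to $-\infty$. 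By \eqref{E:scaling} the corresponding solution is $u_\lambda(x,t)=\lambda^\alpha u(\lambda x,\lambda^{\alpha+1}t)$, and \eqref{E:breaking time} forces a wave-breaking time $t_{*,\lambda}\sim\lambda^{-(\alpha+1)}$ that also tends to $0$. Choosing $\lambda$ a large power of $\epsilon^{-1}$ thus makes both $\|u_{0,\lambda}\|_{H^s}<\epsilon$ and $t_{*,\lambda}<\epsilon$. The endpoint $\alpha=-1$, where scaling alone does not shrink the breaking time, is handled by also varying $M=-\inf\phi'$ as an independent large parameter.

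It remains to show that $\|u_\lambda(\cdot,t)\|_{H^s}>\epsilon^{-1}$ at some $t\in(0,t_{*,\lambda})$. The mechanism is that as $t\nearrow t_{*,\lambda}$, the characteristic picture behind \eqref{E:breaking time} forces the solution near the breaking point to look like a cube-root cusp
\[
u_\lambda(x,t)\approx a(t)+b(t)(x-x_0(t))^{1/3},
\]
reflecting the simultaneous vanishing of the first two $y$-derivatives of the characteristic map $x(y,t)$ at the first breaking. A cut-off copy of $(x-x_0)^{1/3}$ has Fourier decay $|\xi|^{-4/3}$, so the weighted integral $\int|\xi|^{2s-8/3}\,d\xi$ diverges exactly for $s>5/6$, which is precisely the lower endpoint of the $s$-range in the theorem. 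Quantifying the approach to this self-similar profile yields a lower bound of the form $\|u_\lambda(\cdot,t)\|_{H^s}\gtrsim(t_{*,\lambda}-t)^{-\beta}$ with some $\beta(s)>0$, which exceeds $\epsilon^{-1}$ for $t$ close enough to $t_{*,\lambda}$.

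The main obstacle is making this cusp picture quantitative in the presence of the fractional dispersion: $|\partial_x|^\alpha\partial_x$ is nonlocal and cannot be pushed along characteristics. The natural route is to compare $u_\lambda$ with the inviscid Burgers solution on the wave-breaking time scale, controlling the difference in a high-regularity Sobolev space $H^{s'}$, $s'>3/2$, by a Kato-type energy estimate, and then passing down to $H^s$ through a Gagliardo--Nirenberg interpolation. The conditions $s>5/6$ and $\alpha<-1/3$ enter in tandem: they ensure that the cube-root Hölder regularity $1/3$ sits strictly below the Sobolev embedding exponent $s-1/2$, so the cusp profile is genuinely outside $H^s(\mathbb{R})$.
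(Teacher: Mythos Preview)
Your overall intuition is right---scaling to make the datum small, and the cube-root Burgers cusp to force $H^s$ blowup for $s>5/6$---and this is exactly what the paper does. But there is a genuine gap in your execution: the single scaling parameter $\lambda$ is not enough, and the proposed ``Kato-type energy estimate'' comparing $u_\lambda$ with Burgers cannot close.

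The point is that $u_{0,\lambda}=\lambda^\alpha\phi(\lambda\,\cdot)$ is an \emph{exact} scaling of the equation, so $u_\lambda(x,t)=\lambda^\alpha u(\lambda x,\lambda^{\alpha+1}t)$ with $u$ the \emph{fixed} solution of the full dispersive equation launched from $\phi$. In these rescaled variables the difference between $u$ and the Burgers solution $v$ with the same datum $\phi$ is governed by
\[
\partial_T w+|\partial_X|^\alpha\partial_X w+|\partial_X|^\alpha\partial_X v+\cdots=0,
\]
and the forcing $|\partial_X|^\alpha\partial_X v$ is $O(1)$, independent of $\lambda$. So $w$ is $O(1)$: the one-parameter family never enters a small-dispersion regime, and there is no smallness to feed into Gronwall. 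Likewise, the wave-breaking result you cite gives $\inf_x\partial_x u\to-\infty$ and the time estimate \eqref{E:breaking time}, but it does not supply the quantitative cube-root self-similar profile for the \emph{dispersive} solution; you would need that profile at the level of the full equation, which is a much harder statement than anything available.

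The paper closes this gap by introducing a \emph{second} parameter $\nu$, passing via \eqref{def:v} to the rescaled equation \eqref{E:v}, in which the dispersion carries the coefficient $\nu^{-\alpha}$. Since $\alpha<0$, this coefficient tends to $0$ as $\nu\to0$, and now the energy comparison with Burgers (Lemma~\ref{lem:energy}) does close: $\|u^{(\nu)}-u^{(0)}\|_{H^k}\lesssim\nu^{-\alpha/2}$ on a time interval approaching $[0,T_*)$. The cube-root asymptotics are then established for the Burgers solution $u^{(0)}$ alone (Lemma~\ref{lem:Burgers asym}, a classical characteristics computation), yielding $\|u^{(0)}(\cdot,T)\|_{\dot H^s}\gtrsim(T_*-T)^{5/4-3s/2}$, and the comparison transfers this to $u^{(\nu)}$. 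Finally the two-parameter family $u^{(\lambda,\nu)}(x,t)=\lambda^\alpha u^{(\nu)}(\lambda x/\nu,\lambda^{\alpha+1}t/\nu)$ is tuned: $\lambda$ shrinks the $H^s$ norm of the datum (using $s<s_c=1/2-\alpha$), $\nu$ makes the zero-dispersion approximation accurate, and $T\to T_*-$ inflates the norm. This two-parameter decoupling is the missing idea in your sketch; once you add it, your outline becomes essentially the paper's proof.
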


Theorem~\ref{thm:R} implies that the datum-to-solution map for \eqref{E:main}-\eqref{def:u0}, which exists from $H^s(\mathbb{R})$ to $C([-t_*,t_*]; H^s(\mathbb{R}))$ for some $t_*>0$ when $s>3/2$, may not be continuously extended to $5/6<s<1/2-\alpha\,(<3/2)$. In particular, \eqref{E:main}-\eqref{def:u0} is ill-posed in $H^s(\mathbb{R})$ for $5/6<s<1/2-\alpha$. 

Recall that \eqref{E:main} is $\dot{H}^{1/2-\alpha}(\mathbb{R})$ critical, whence in Theorem~\ref{thm:R}, the norm inflation takes place in a range of supercritical Sobolev spaces. Note that $5/6=1/2-\alpha$ when $\alpha=-1/3$. The restriction $s<5/6$ may be an artifact of the method of the proof. Perhaps, a better understanding of the blowup of solutions of the inviscid Burgers equation improves this. Note that $1/2-\alpha=3/2$ when $\alpha=-1$. Thus the local well-posedness result of \eqref{E:main}-\eqref{def:u0} in $H^s(\mathbb{R})$ for $s>3/2$ is sharp when $\alpha=-1$. 

The proof of Theorem~\ref{thm:R} is similar to that in \cite{CCT-R} for nonlinear Schr\"odinger equations, combining scaling symmetry (see \eqref{E:scaling}) and a quantitative study of the equation in the {\em zero dispersion limit} (see \eqref{E:v} and \eqref{E:Burgers}). But the main difference lies in that the inviscid Burgers equation in the zero dispersion limit may be solved exactly, but implicitly, and its solution blows up in finite time. 

In the usual well-posedness theory, one would regard \eqref{E:main} as a perturbation of the linear equation. Here we take the opposite point of view and regard the equation as a dispersive perturbation of the inviscid Burgers equation. We show that the solution of \eqref{E:v} and \eqref{def:un} remains close to the solution of the inviscid Burgers equation, which by the way blows up in finite time, for small values of the dispersion parameter. We then vary the scaling and dispersion parameters so that the initial datum is sufficiently small in the desired Sobolev space but the solution of \eqref{E:main}-\eqref{def:u0} becomes sufficiently large in the Sobolev space after a sufficiently short time. 

The present treatment may be adapted to a broad class of nonlinear dispersive equations, provided that they enjoy scaling symmetry and the solutions in the zero dispersion limit grow unboundedly large in finite or infinite time, for instance, to the water wave problem. This is an interesting direction of future research.

\begin{theorem}[Norm inflation in $\mathbb{T}$]\label{thm:T}
Let $-1\leq\alpha<2$, and assume that $s<-2$. For every $\epsilon>0$, there exist $u_0\in C^\infty(\mathbb{T})$, $t$ in the interval $(0,\epsilon)$, and the solution $u$ of \eqref{E:main}-\eqref{def:u0} such that 
\[
\|u_0\|_{\dot{H}^s(\mathbb{T})}<\epsilon \quad\text{but}\quad \|u(\cdot,t)\|_{\dot{H}^s(\mathbb{T})}>\epsilon^{-1}.
\] 
\end{theorem}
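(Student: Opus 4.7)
The plan is to construct initial data concentrated at two adjacent high-frequency modes whose quadratic interaction deposits mass at the very low frequency $\pm 1$; since $s<-2$ makes the $\dot{H}^s(\mathbb{T})$ norm essentially the magnitude of the frequency-$\pm 1$ Fourier coefficients, this low-frequency mass is what inflates the norm. Concretely, take
\[
u_0(x) = A\bigl(\cos(Nx)+\cos((N+1)x)\bigr),
\]
with $A=\epsilon^{-a}$ and $N\sim\epsilon^{-b}$ for positive exponents $a,b$ to be chosen. Then $\|u_0\|_{\dot{H}^s(\mathbb{T})}\sim A N^s=\epsilon^{-a+b|s|}<\epsilon$ provided $b|s|>1+a$.

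The first substantive step is to compute $\hat u(\pm 1,t)$ via Picard iteration. Replacing $u$ to leading order by the linear flow $u_1:=S(t)u_0$ with $S(t)=e^{-t|\partial_x|^\alpha\partial_x}$, the only pairs satisfying $\xi_1+\xi_2=1$ with $\xi_1,\xi_2$ in the spectrum of $u_0$ are $(N+1,-N)$ and $(-N,N+1)$; these contribute $\widehat{u_1^2}(1,s)=\tfrac{A^2}{2}e^{-is\Omega_N}$ with phase $\Omega_N:=(N+1)^{1+\alpha}-N^{1+\alpha}=O(N^\alpha)$. Plugging into the Duhamel formula at $\xi=1$ yields
\[
\hat u_2(1,t) = -\frac{A^2}{4(1-\Omega_N)}\bigl(e^{-it\Omega_N}-e^{-it}\bigr),
\]
of magnitude $\sim A^2 t/4$ for $0<t\lesssim 1/|\Omega_N|$. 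Choosing $a>1$ and $t_*\sim\epsilon^{2a-1}\in(0,\epsilon)$ makes this exceed $\epsilon^{-1}$; since $\alpha<2$ keeps $|\Omega_N|\lesssim N^\alpha$ and $s<-2$ leaves ample slack in $b|s|>1+a$, the remaining conditions (small initial norm, small time, and validity $t_*|\Omega_N|\lesssim 1$ of the estimate) translate into linear inequalities in $a,b$ that are jointly satisfiable.

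The main obstacle is the error estimate: controlling $|\hat u(1,t_*)-\hat u_2(1,t_*)|$ so that it remains strictly smaller than the leading $\sim A^2 t_*$. I would write this difference as $-\tfrac{i}{2}\int_0^{t_*}e^{-i(t_*-s)}\,\widehat{(u-u_1)(u+u_1)}(1,s)\,ds$ and apply $|\widehat{fg}(1)|\le\|f\|_{L^2}\|g\|_{L^2}/(2\pi)$. The factor $\|u-u_1\|_{L^2}$ is bounded by a second application of Duhamel together with the local-in-time $H^r$ well-posedness for $r>3/2$ (cited earlier in the excerpt) and the $L^2$ conservation law in \eqref{E:conservation}, giving roughly $\|u-u_1\|_{L^2}\lesssim A^2 N t_*$. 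The resulting error $\lesssim A^3 N t_*^2$ is absorbed once $AN t_*\ll 1$, which is compatible with the earlier exponent constraints precisely because $s<-2$ and $\alpha<2$. Once this is closed, $\|u(\cdot,t_*)\|_{\dot{H}^s(\mathbb{T})}\ge|\hat u(1,t_*)|>\epsilon^{-1}$, completing the proof.
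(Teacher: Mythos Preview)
Your proposal is essentially the paper's proof: the same two-mode initial data $A(\cos Nx+\cos(N+1)x)$, the same mechanism (the quadratic beat at frequency $\pm1$ dominating $\dot H^s$ for $s<-2$), the same second-iterate computation giving $|\hat u_2(1,t)|\sim A^2t$, and the same closing condition $ANt_*\ll1$. The only difference is bookkeeping: the paper fixes $A=\epsilon n^{-s}$, $t=n^{7s/4-1/2}$ and controls the remainder by writing $u=u_1+u_2+w$ and running an $L^2$ energy inequality on $w$ (using $\|\partial_x u_1\|_{L^\infty}\sim AN$ directly), whereas your Duhamel bound $\|u-u_1\|_{L^2}\lesssim A^2Nt_*$ is correct but needs a little more than the one-line justification you give---you must also stay inside the $H^{3/2+}$ lifespan $\sim (AN^{3/2+})^{-1}$ and interpolate with $L^2$ conservation to recover $\|\partial_x u\|_{L^2}\lesssim AN$, which forces $a>1+\tfrac{3}{2}b$ rather than $a>1+b$; this is still compatible with $b|s|>a+1$ precisely because $s<-2$.
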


Theorem~\ref{thm:T} implies that the datum-to-solution map for \eqref{E:main}-\eqref{def:u0}, were it to exist in $H^s(\mathbb{T})$ for all $s\in\mathbb{R}$, is discontinuous at the origin for $s<-2$ (But it is continuous for any $s>3/2$). Similar results hold in the non-periodic setting and for \eqref{E:whitham} for a broad range of the dispersion symbol, but we do not include the details here. 

The proof of Theorem~\ref{thm:T} is to construct an explicit approximate solution which enjoys the desired norm inflation behavior, and then to use an a priori bound to show that the solution remains close to the approximate solution. 

Perhaps, the simplest type of initial datum in the periodic setting is $\cos(nx)$ for $n\in\mathbb{N}$. Note that $\cos(nx+n^{\alpha+1}t)$ solves the linear part of \eqref{E:main} and $u(x,0)=\cos(nx)$. It then follows from \eqref{E:galilean} that 
\[
\cos(nx+n^{\alpha+1}t-\omega t)+\omega
\]
solves the linear part of \eqref{E:main} and $u(x,0)=\cos(nx)+\omega$ for any $\omega\in\mathbb{R}$. Molinet \cite{Molinet2007} used this to prove the failure of uniform continuity of the datum-to-solution map for \eqref{E:main}-\eqref{def:u0} in $H^s(\mathbb{T})$ for any $s>0$. But the datum-to-solution map for the Benjamin-Ono equation is uniformly continuous once we restrict the attention to functions of fixed mean value, so that \eqref{E:galilean} may not apply. Here we work with functions of mean zero, which prevents us from manipulating \eqref{E:galilean}, and, instead, we develop the next simplest type of initial datum, supported on two adjacent, high frequency modes (see \eqref{def:u0T}). We then show that the nonlinear interaction of the high frequency modes drives oscillation of a low frequency mode, which is larger in Sobolev spaces of negative exponents. 

\section{Proof of Theorem~\ref{thm:R}}\label{sec:R}

Let $-1\leq\alpha<-1/3$, and assume that $5/6<s<1/2-\alpha$. Assume that $u_0$ is nonzero Schwartz function. 

For $\nu>0$, we relate \eqref{E:main}-\eqref{def:u0} to 
\begin{equation}\label{E:v}
\partial_Tu^{(\nu)}+\nu^{-\alpha}|\partial_X|^\alpha \partial_Xu^{(\nu)}+u^{(\nu)}\partial_Xu^{(\nu)}=0
\end{equation}
and 
\begin{equation}\label{def:un}
u^{(\nu)}(X,0)=u_0(X),
\end{equation} 
via 
\begin{equation}\label{def:v}
u(x,t)=u^{(\nu)}(x/\nu, t/\nu).
\end{equation}
As $\nu\to0$, formally, \eqref{E:v} tends to the inviscid Burgers equation
\begin{equation}\label{E:Burgers}
\partial_Tu^{(0)}+u^{(0)}\partial_Xu^{(0)}=0.
\end{equation}


Recall from the well-posedness theory (see \cite{Kato}, for instance) that for any $\nu\geq0$, a unique solution of \eqref{E:v} (or \eqref{E:Burgers}) and \eqref{def:un} exists in $C((-T_*^{(\nu)},T_*^{(\nu)})); H^{s_*}(\mathbb{R}))$ for some $T_*^{(\nu)}>0$, provided that $s_*>3/2$. Let $T_*^{(\nu)}$ be the maximal time of existence. When $\nu>0$, recall from \cite{Hur-breaking} that the solution of \eqref{E:v}-\eqref{def:un} blows up merely as a result of wave breaking at the time satisfying \eqref{E:breaking time}. Therefore, 
\[
T_*^{(\nu)}>\frac{1}{-\inf_{x\in\mathbb{R}}u_0'(x))}-0^+
\]
independently of $\nu>0$.

When $\nu=0$, for $x\in\mathbb{R}$ (by abuse of notation), let $X(T;x)$ solve
\begin{equation}\label{def:char}
\frac{dX}{dT}(T;x)=u^{(0)}(X(T;x),T)\quad\text{and}\quad X(0;x)=x.
\end{equation}
Since $u^{(0)}(X,T)$ is bounded and satisfies a Lipschitz condition in $X$ for all $X\in\mathbb{R}$ for all $T\in(-T_*^{(0)},T_*^{(0)})$, it follows from the theory of ordinary differential equations that $X(\cdot;x)$ exists throughout the interval $(-T_*^{(0)},T_*^{(0)})$ for all $x\in\mathbb{R}$. Furthermore, $x\mapsto X(\cdot;x)$ is continuously differentiable throughout the interval $(-T_*^{(0)},T_*^{(0)})$ for all $x\in\mathbb{R}$.

It is well known that one may solve \eqref{E:Burgers} and \eqref{def:un} by the method of characteristics. Specifically,
\begin{equation}\label{E:Burgers soln}
u^{(0)}(X(T;x),T)=u^{(0)}(X(0;x),0)=u_0(x).
\end{equation}
Differentiating \eqref{def:char} with respect to $x$, we use \eqref{E:Burgers soln} to arrive at
\[
\frac{d\partial_xX}{dT}(T;x)=u_0'(x)\quad\text{and}\quad \partial_xX(0;x)=1,
\]
whence
\begin{equation}\label{E:X_x}
\partial_xX(T;x)=1+u_0'(x)T.
\end{equation}
Note that if $u_0'(x)<0$ for some $x\in\mathbb{R}$ then 
\[
\partial_Xu^{(0)}(X(T;x),T)=\frac{u_0'(x)}{1+u_0'(x)T}
\]
becomes unbounded at such $x$ in finite time. Therefore, 
\begin{equation}\label{E:t*}
T_*^{(0)}=-\frac{1}{\inf_{x\in\mathbb{R}}u_0'(x)}=:-\frac{1}{u_0'(x_*)}
\end{equation}
for some $x_*\in\mathbb{R}$. In what follows, we write $T_*$ for $T_*^{(0)}$ for simplicity of notation.

A straightforward calculation reveals that
\begin{align*}
\partial_Xu^{(0)}(X,T)=&\frac{u_0'(x)}{1+u_0'(x)T}\leq\frac{\|u_0'\|_{C^0(\mathbb{R})}}{1+u_0'(x_*)T},
\intertext{and}
\partial_X^2u^{(0)}(X,T)=&\frac{u_0''(x)}{(1+u_0'(x)T)^3}\leq \frac{\|u_0''\|_{C^0(\mathbb{R})}}{(1+u_0'(x_*)T)^3}
\end{align*}
pointwise in $\mathbb{R}$ for any $0<T<T_*$. Therefore,
\begin{equation}\label{E:v0Hk}
\|u^{(0)}(\cdot,T)\|_{H^k(\mathbb{R})}, \|u^{(0)}(\cdot,T)\|_{C^k(\mathbb{R})}\leq \frac{C}{(1+u_0'(x_*)T)^{k+1}}
\end{equation}
for any $0<T<T_*$ for any integer $k\geq 1$. 
Below we study the asymptotic behavior of the solution of \eqref{E:Burgers} and \eqref{def:un} near blowup, and we compute $\|u^{(0)}(\cdot,T)\|_{H^s(\mathbb{R})}$ for $s<1$ for $T$ close to $T_*$. 

\begin{lemma}\label{lem:Burgers asym}
Assume that $u_0$ is a nonzero Schwartz function. If the solution of \eqref{E:Burgers} and \eqref{def:un} blows up at some $X_*\in\mathbb{R}$ and at $T_*=T_*^{(0)}>0$ then 
\begin{equation}\label{E:Burgers asym}
u^{(0)}(X,T)\sim u^{(0)}(X_*,T_*)-\frac{1}{T_*}(T_*-T)^{1/2}U\Big(\frac{X-X_*}{(T_*-T)^{3/2}}\Big)+o((T_*-T)^{1/2})
\end{equation}
as $T\to T_*-$ uniformly for $|X-X_*|\lesssim (T_*-T)^{3/2}$, where $U=U(Y)$ is real valued and satisfies 
\begin{equation}\label{E:U}
C_1U(Y)+C_3U^3(Y)=Y
\end{equation}
for some constants $C_1,C_3>0$, and $o((T_*-T)^{1/2})$ is a function of $\frac{X-X_*}{(T_*-T)^{3/2}}$. 
\end{lemma}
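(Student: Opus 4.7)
The plan is to extract the asymptotics by an explicit method-of-characteristics computation near the shock. Recall from \eqref{E:Burgers soln} and \eqref{E:X_x} that
\[
u^{(0)}(X,T)=u_0(x)\quad\text{whenever}\quad X=x+u_0(x)T,
\]
and that breakdown occurs precisely when $1+u_0'(x)T=0$. Since $u_0$ is Schwartz, the minimum in \eqref{E:t*} is attained at some $x_*\in\mathbb{R}$, so $u_0''(x_*)=0$, and one expects generically $u_0'''(x_*)>0$; we place this as a mild nondegeneracy assumption and write $T_*=-1/u_0'(x_*)$ and $X_*=x_*+u_0(x_*)T_*$. Using Galilean invariance \eqref{E:galilean} with $\omega=u_0(x_*)$, I would reduce without loss of generality to the case $u_0(x_*)=0$, which aligns the characteristic through $x_*$ with the vertical line $X\equiv X_*$; this is what makes the stated scaling in $X-X_*$ (rather than some moving centre) correct.

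Next, I would Taylor-expand the characteristic map near $(x_*,T_*)$. Writing $\xi=x-x_*$ and $\tau=T_*-T$, and using $1+u_0'(x_*)T_*=0$, $u_0''(x_*)=0$, and $u_0(x_*)=0$, one computes
\begin{equation*}
X-X_*=\frac{\xi\tau}{T_*}+\frac{u_0'''(x_*)T_*}{6}\,\xi^3+O(\xi^4)+O(\xi^3\tau).
\end{equation*}
Introducing the self-similar variables $Y=(X-X_*)/\tau^{3/2}$ and $Z=\xi/\tau^{1/2}$ and dividing through by $\tau^{3/2}$ gives
\begin{equation*}
Y=\frac{1}{T_*}\,Z+\frac{u_0'''(x_*)T_*}{6}\,Z^3+O(\tau^{1/2}),
\end{equation*}
which has the form \eqref{E:U} with $C_1=1/T_*$ and $C_3=u_0'''(x_*)T_*/6$. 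Because the leading-order cubic on the right-hand side is strictly increasing in $Z$, the implicit function theorem (or the explicit Cardano formula) uniquely defines a smooth odd function $U\colon\mathbb{R}\to\mathbb{R}$ satisfying $C_1U+C_3U^3=Y$, and for $|Y|\lesssim 1$ the remainder forces $Z=U(Y)+O(\tau^{1/2})$.

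Finally, I would substitute back into $u^{(0)}(X,T)=u_0(x)$. Using $u_0(x_*)=0$, $u_0''(x_*)=0$, and $u_0'(x_*)=-1/T_*$,
\begin{equation*}
u^{(0)}(X,T)=u_0'(x_*)\,\xi+O(\xi^3)=-\frac{1}{T_*}\,\tau^{1/2}Z+O(\tau^{3/2}),
\end{equation*}
which combined with $Z=U(Y)+O(\tau^{1/2})$ and $u^{(0)}(X_*,T_*)=u_0(x_*)=0$ yields exactly \eqref{E:Burgers asym}, with the $o((T_*-T)^{1/2})$ term automatically a function of $Y$ by construction.

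The principal obstacle is organisational rather than conceptual: one must keep the error terms uniform in the range $|X-X_*|\lesssim\tau^{3/2}$ (equivalently $|Y|\lesssim 1$), which requires verifying that the implicit solution $x=x(X,T)$ stays inside a shrinking neighbourhood of $x_*$ where the cubic Taylor approximation is valid, and that the Jacobian estimate $\partial_xX\sim \tau+\xi^2$ from \eqref{E:X_x} is strong enough to invert the characteristic map uniformly down to the blowup time. The subsidiary point of using \eqref{E:galilean} to set $u_0(x_*)=0$ is essential to make the stated self-similar form with $X-X_*$ (and not with a drifting centre) hold as written.
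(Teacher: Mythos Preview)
Your proposal is correct and follows essentially the same route as the paper: reduce via Galilean invariance to $u_0(x_*)=0$, Taylor-expand the characteristic map $X=x+u_0(x)T$ around $(x_*,T_*)$ using $u_0'(x_*)=-1/T_*$, $u_0''(x_*)=0$, $u_0'''(x_*)>0$, pass to the self-similar variables $Y=(X-X_*)/(T_*-T)^{3/2}$ and $Z=(x-x_*)/(T_*-T)^{1/2}$, read off the cubic relation, and substitute back into $u^{(0)}=u_0(x)$. Your explicit identification $C_1=1/T_*$, $C_3=u_0'''(x_*)T_*/6$ and your remark that $u_0'''(x_*)>0$ is a nondegeneracy assumption (the minimum only gives $\geq0$) are both welcome clarifications that the paper leaves implicit.
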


\begin{proof}
Without loss of generality, we may assume that $u_0(x_*)=0$. As a matter of fact, \eqref{E:Burgers} remains invariant under $X\mapsto X-u_0(x_*)T$ and $u^{(0)}\mapsto u^{(0)}+u_0(x_*)$. Therefore, \eqref{E:Burgers soln} implies $X_*=x_*$. Moreover, \eqref{E:t*} implies
\begin{equation}\label{E:u(x*)}
u_0'(x_*)=-\frac{1}{T_*}<0, \quad u_0''(x_*)=0,\quad\text{and}\quad u_0'''(x_*)>0.
\end{equation}

For $|x-x_*|$ and $|T-T_*|$ sufficiently small, we expand \eqref{E:X_x} and we use $u_0(x_*)=0$ and \eqref{E:u(x*)} to arrive at
\[
\partial_xX(T;x)=u_0'(x_*)(T-T_*)+\frac12u_0'''(x_*)T_*(x-x_*)^2+o((T-T_*)^2+(x-x_*)^2).
\]
An integration then leads to 
\begin{align*}
X(T;x)-X(T;x_*)=u_0'(x_*)(T-T_*)(x-x_*)+&\frac16u_0'''(x_*)T_*(x-x_*)^3\\ +&o(((T-T_*)^2+(x-x_*)^2)(x-x_*))
\end{align*}
as $x\to x_*$ and $T\to T_*-$. Note that $X(T;x_*)=X_*=x_*$ for any $0\leq T<T_*$. Therefore, we use \eqref{E:t*} to deduce that
\begin{align}\label{E:X-X*}
X(T;x)-X_*=u_0'(x_*)(T-T_*)(x-x_*)-&\frac16\frac{u_0'''(x_*)}{u_0'(x_*)}(x-x_*)^3\\ 
+&o(((T-T_*)^2+(x-x_*)^2)(x-x_*))\notag
\end{align}
as $x\to x_*$ and $T\to T_*-$.
Moreover, we expand \eqref{E:Burgers soln} to arrive at
\begin{equation}\label{E:Burgers soln'}
u^{(0)}(X(T;x),T)=u_0'(x_*)(x-x_*)+o((x-x_*))
\end{equation}
as $x\to x_*$. 

Let
\begin{equation}\label{def:self similar}
Y=\frac{X-X_*}{(T_*-T)^{3/2}}\quad\text{and}\quad U=\frac{x-x_*}{(T_*-T)^{1/2}}.
\end{equation}
By the way, this is a similarity solution.
For $|X-X_*|$ and $|T-T_*|$ sufficiently small, satisfying $|Y|\lesssim 1$, a straightforward calculation reveals that \eqref{E:X-X*} becomes
\[
Y=-u_0'(x_*)U-\frac16\frac{u_0'''(x_*)}{u_0'(x_*)}U^3+o(U+U^3).
\]
Note that $Y=-u_0'(x_*)U-\frac16\frac{u_0'''(x_*)}{u_0'(x_*)}U^3$ supports a unique and real-valued solution $U=U(Y)$, say. For $|X-X_*|$ and $|T-T_*|$ sufficiently small, satisfying $|Y|\lesssim 1$, similarly, \eqref{E:Burgers soln'} becomes
\[
u^{(0)}(X,T)=u_0'(x_*)(T_*-T)^{1/2}U(Y)+o((T_*-T)^{1/2}),
\]
where $o((T_*-T)^{1/2})$ is a function of $Y$. This completes the proof.
\end{proof}

\begin{corollary}\label{cor:Burgers norm}
Under the hypothesis of Lemma~\ref{lem:Burgers asym}, for any $s>0$,
\begin{equation}\label{E:u0sR}
\|u^{(0)}(\cdot,T)\|_{\dot{H}^s(\mathbb{R})}\gtrsim (T_*-T)^{5/4-3s/2}
\end{equation}
as $T\to T_*-$. 
\end{corollary}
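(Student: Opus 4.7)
The plan is to extract the $\dot H^s$ lower bound directly from the self-similar expansion supplied by Lemma~\ref{lem:Burgers asym}. Since $\dot H^s$ annihilates constants, it suffices to control $v(X,T):=u^{(0)}(X,T)-u^{(0)}(X_*,T_*)$, which on the window $|X-X_*|\lesssim(T_*-T)^{3/2}$ has the structure of a rescaled profile with amplitude $A=(T_*-T)^{1/2}$ and spatial scale $\lambda=(T_*-T)^{3/2}$. The basic dilation identity $\|A\phi(\cdot/\lambda)\|_{\dot H^s}=A\lambda^{1/2-s}\|\phi\|_{\dot H^s}$ then suggests the exponent $\tfrac12+\tfrac32(\tfrac12-s)=\tfrac54-\tfrac{3s}2$ that appears in~\eqref{E:u0sR}.

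To make this rigorous in the range $0<s<1$, I would use the Gagliardo--Slobodeckij characterization
\[
\|v(\cdot,T)\|_{\dot H^s(\mathbb R)}^2 \sim \iint\frac{|v(X,T)-v(Y,T)|^2}{|X-Y|^{1+2s}}\,dX\,dY
\]
and restrict the region of integration to pairs $(X,Y)$ whose self-similar coordinates $Y_1=(X-X_*)/(T_*-T)^{3/2}$ and $Y_2=(Y-X_*)/(T_*-T)^{3/2}$ lie in the disjoint compact intervals $[-1,-\tfrac12]$ and $[\tfrac12,1]$, respectively. On this rectangle $U$ is strictly monotone (it solves $C_1U+C_3U^3=Y$), hence $|U(Y_1)-U(Y_2)|\gtrsim 1$; absorbing the $o((T_*-T)^{1/2})$ remainder of Lemma~\ref{lem:Burgers asym} for $T$ close to $T_*$ gives $|v(X,T)-v(Y,T)|\gtrsim(T_*-T)^{1/2}$, while $|X-Y|\sim(T_*-T)^{3/2}$ and the measure of the rectangle is $\sim(T_*-T)^{3}$. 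Collecting these three scalings produces $\|v(\cdot,T)\|_{\dot H^s}^2\gtrsim(T_*-T)^{1-3(1+2s)/2+3}=(T_*-T)^{5/2-3s}$ and \eqref{E:u0sR} follows by taking square roots.

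For $s\ge 1$, which is relevant since Theorem~\ref{thm:R} allows $s$ up to $1/2-\alpha<3/2$, I would use $\|v\|_{\dot H^s(\mathbb R)}=\|\partial_X^k v\|_{\dot H^{s-k}(\mathbb R)}$ with $k=\lfloor s\rfloor$. Each derivative in $X$ multiplies the amplitude by $\lambda^{-1}=(T_*-T)^{-3/2}$ while preserving the spatial scale, so applying the Slobodeckij argument to $\partial_X^k v$ at fractional index $s-k\in[0,1)$ reproduces the same exponent $\tfrac54-\tfrac{3s}2$.

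The main technical point is ensuring that the $o((T_*-T)^{1/2})$ remainder does not swamp the leading term on the chosen rectangle. Since the remainder in Lemma~\ref{lem:Burgers asym} is a function of the self-similar variable $Y$ that tends to zero as $T\to T_*-$ uniformly on compact $Y$-sets, while the principal contribution $\frac{1}{T_*}(T_*-T)^{1/2}(U(Y_1)-U(Y_2))$ is bounded below in absolute value by a positive multiple of $(T_*-T)^{1/2}$ on the bounded rectangle $[-1,-\tfrac12]\times[\tfrac12,1]$, the leading term dominates pointwise once $T$ is sufficiently close to $T_*$, which is all that the double-integral argument requires.
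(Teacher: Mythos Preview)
Your approach is correct and takes a genuinely different route from the paper's. The paper works directly with the Fourier form of the $\dot H^s$ seminorm: it bounds $\|u^{(0)}(\cdot,T)\|_{\dot H^s}^2$ below by $\int_{|X-X_*|\lesssim(T_*-T)^{3/2}}\big||\partial_X|^s u^{(0)}\big|^2\,dX$, substitutes the asymptotic~\eqref{E:Burgers asym} for $u^{(0)}$ on that window, and rescales to the self-similar variable $Y$, obtaining the exponent $5/2-3s$ in one stroke for every $s>0$. That computation is concise but somewhat formal, since $|\partial_X|^s$ is nonlocal and~\eqref{E:Burgers asym} is asserted only on the window. Your Gagliardo--Slobodeckij argument avoids this issue entirely: the double integral is manifestly nonnegative, so restricting to the compact rectangle $Y_1\in[-1,-\tfrac12]$, $Y_2\in[\tfrac12,1]$ is a genuine lower bound, and only pointwise values of $u^{(0)}$ on the window enter---no localization of a pseudodifferential operator is needed. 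The price you pay is the case split at $s=1$. For the reduction $\|v\|_{\dot H^s}=\|\partial_X^k v\|_{\dot H^{s-k}}$ in the range $s\geq1$, be aware that Lemma~\ref{lem:Burgers asym} as stated gives the self-similar asymptotic for $u^{(0)}$ itself, not for its $X$-derivatives; to run your rectangle argument on $\partial_X v$ you need the analogous expansion with leading term $-\tfrac{1}{T_*}(T_*-T)^{-1}U'(Y)$ and a remainder that is $o((T_*-T)^{-1})$ uniformly on compact $Y$-sets. This follows by the same method-of-characteristics bookkeeping used in the proof of the lemma (differentiate $\partial_X u^{(0)}=u_0'(x)/(1+u_0'(x)T)$ and expand near $x_*$), but it should be stated rather than assumed.
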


In particular, when $s>5/6$, $\|u^{(0)}(\cdot,T)\|_{\dot{H}^s(\mathbb{R})} \to \infty$ as $T\to T_*-$.

\begin{proof}
For $|T-T_*|$ sufficiently small, we calculate 
\begin{align*}
\|u^{(0)}(\cdot,&T)\|_{\dot{H}^s(\mathbb{R})}^2 \\ 
\geq& \int_{|X-X_*|\lesssim (T_*-T)^{3/2}}||\partial_X|^su^{(0)}(X,T)|^2~dX \\
\sim&(T_*-T)\int_{|X-X_*|\lesssim(T_*-T)^{3/2}}
\Big||\partial_X|^s\Big(\frac{1}{T_*}U\Big(\frac{X-X_*}{(T_*-T)^{3/2}}\Big)+o(1)\Big)\Big|^2~dX \\
\sim&(T_*-T)^{5/2-3s}\int_{|Y|\lesssim1}||\partial_Y|^sU(Y)|^2~dY+o((T_*-T)^{5/2-3s}),
\end{align*}
and \eqref{E:u0sR} follows. Here the second inequality uses \eqref{E:Burgers asym}. Note that $o(1)$ is a function of $Y=\frac{X-X_*}{(T_*-T)^{3/2}}$. The last inequality uses \eqref{def:self similar}. 
\end{proof}

For $\nu>0$ small, one may expect that the solutions of \eqref{E:v} and \eqref{E:Burgers} subject to the same initial condition remain close to each other at least for short times. Below we make this precise for a time interval, depending on $\nu$, which tends to $(0,T_*)$ as $\nu\to 0$. 

\begin{lemma}\label{lem:energy}
Assume that $u_0$ is a nonzero Schwartz function. Assume that $u^{(\nu)}$ solves \eqref{E:v} and \eqref{def:un}, and $u^{(0)}$ solves \eqref{E:Burgers} and \eqref{def:un} during the interval $(-T_*+0^+ ,T_*-0^+)$. For $\nu>0$ sufficiently small and $k\geq 2$ an integer, 
\begin{equation*}\label{E:energy}
\|(u^{(\nu)}-u^{(0)})(\cdot,T)\|_{H^k(\mathbb{R})} \lesssim \nu^{-\alpha/2} 
\qquad \text{for any $0<T\leq T_*\Big(1-\Big(\frac{C}{|\log\nu|}\Big)^{1/C}\Big)$}
\end{equation*}
for some constant $C>0$.
\end{lemma}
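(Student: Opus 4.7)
The plan is an $H^k$ energy estimate on the difference $w:=u^{(\nu)}-u^{(0)}$, closed by a bootstrap that races the small forcing $\nu^{-\alpha}$ against the known blowup rate of $u^{(0)}$. Subtracting \eqref{E:Burgers} from \eqref{E:v} and rewriting the quadratic difference as $u^{(\nu)}\partial_Xw+w\partial_Xu^{(0)}$, I would obtain
\begin{equation*}
\partial_Tw+\nu^{-\alpha}|\partial_X|^\alpha\partial_Xw+u^{(\nu)}\partial_Xw+w\partial_Xu^{(0)}=-\nu^{-\alpha}|\partial_X|^\alpha\partial_Xu^{(0)},
\end{equation*}
and view the right-hand side as a small forcing (because $\alpha<0$ makes $\nu^{-\alpha}$ small) whose $H^k$ norm is controlled by $\nu^{-\alpha}\|u^{(0)}\|_{H^{k+1}}$. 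Since $|\partial_X|^\alpha\partial_X$ is skew-adjoint, the dispersive term drops from the $H^k$ energy identity, and the standard Moser/commutator estimates together with the Sobolev embedding $H^k\hookrightarrow C^1$ (valid for $k\ge 2$) yield
\begin{equation*}
\tfrac{d}{dT}\|w\|_{H^k}\lesssim\bigl(\|u^{(\nu)}\|_{H^k}+\|u^{(0)}\|_{H^{k+1}}\bigr)\|w\|_{H^k}+\nu^{-\alpha}\|u^{(0)}\|_{H^{k+1}}.
\end{equation*}

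Next I would set $g(T):=1-T/T_*$, so that \eqref{E:v0Hk} gives $\|u^{(0)}(\cdot,T)\|_{H^{k+1}}\lesssim g(T)^{-(k+2)}$, and bootstrap on the maximal interval $[0,T^{\dagger}]$ on which $\|w(T)\|_{H^k}\le g(T)^{-(k+2)}$. The triangle inequality then promotes this to $\|u^{(\nu)}\|_{H^k}\lesssim g^{-(k+2)}$, and the energy estimate reduces to
\begin{equation*}
\tfrac{d}{dT}\|w\|_{H^k}\lesssim g(T)^{-(k+2)}\|w\|_{H^k}+\nu^{-\alpha}g(T)^{-(k+2)}.
\end{equation*}
Gronwall, combined with the elementary bound $\int_0^T g(S)^{-(k+2)}\,dS\lesssim g(T)^{-(k+1)}$, delivers
\begin{equation*}
\|w(T)\|_{H^k}\lesssim\nu^{-\alpha}\,g(T)^{-(k+1)}\exp\bigl(Cg(T)^{-(k+1)}\bigr).
\end{equation*}

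Finally I would choose the cut-off so that $Cg(T)^{-(k+1)}\le\tfrac{|\alpha|}{4}|\log\nu|$, i.e.\ $g(T)\gtrsim|\log\nu|^{-1/(k+1)}$; after absorbing constants this is exactly the range $T\le T_*(1-(C/|\log\nu|)^{1/C})$ claimed in the statement, with $C$ a multiple of $k+1$. On this interval the exponential is bounded by $\nu^{-|\alpha|/4}$ and $g^{-(k+1)}\lesssim|\log\nu|$, so $\|w(T)\|_{H^k}\lesssim\nu^{3|\alpha|/4}|\log\nu|\ll\nu^{|\alpha|/2}=\nu^{-\alpha/2}$; the same bound is dwarfed by $g^{-(k+2)}$, which closes the bootstrap. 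The principal obstacle is the competition between the singular coefficient $g^{-(k+2)}$ --- inherited from the Burgers blowup --- and the small forcing $\nu^{-\alpha}$: it is the exponential Gronwall factor generated by the former that shrinks the admissible time interval to approach $T_*$ only logarithmically in $\nu^{-1}$.
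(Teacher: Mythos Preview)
Your proof is correct and follows essentially the same strategy as the paper's: an $H^k$ energy estimate on $w=u^{(\nu)}-u^{(0)}$ with the dispersive term dropping by skew-adjointness, closed via a bootstrap and Gronwall, and the logarithmic cut-off near $T_*$ dictated by the Burgers blowup rate \eqref{E:v0Hk}. The only cosmetic differences are your splitting of the nonlinearity as $u^{(\nu)}\partial_Xw+w\partial_Xu^{(0)}$ (the paper writes $\partial_X(u^{(0)}w)+w\partial_Xw$) and your bootstrap hypothesis $\|w\|_{H^k}\le g^{-(k+2)}$ in place of the paper's simpler $\|w\|_{H^k}\le 1$; both lead to the same Gronwall inequality and the same time restriction.
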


In particular, for $\nu>0$ sufficiently small and for $s>5/6$, we combine this and Corollary~\ref{cor:Burgers norm} to deduce that 
\begin{equation}\label{E:blowup-v}
\|u^{(\nu)}(\cdot,T)\|_{\dot{H}^s(\mathbb{R})}\sim (T_*-T)^{5/4-3s/2} \to \infty \qquad \text{as}\quad T\to T_*-.
\end{equation}

\begin{proof}
The proof uses the energy method and is rudimentary. Here we include details for the sake of completeness.

Let $w=u^{(\nu)}-u^{(0)}$. Note from \eqref{E:v} and \eqref{E:Burgers} that  
\[
\partial_Tw+\nu^{-\alpha}|\partial_X|^\alpha\partial_Xw+\nu^{-\alpha}|\partial_X|^\alpha\partial_Xu^{(0)}
+\partial_X(u^{(0)}w)+w\partial_Xw=0
\]
and $w(X,0)=0$. 
Differentiating this $j$ times with respect to $X$, where $0\leq j\leq k$ an integer, and integrating over $\mathbb{R}$ against $\partial_X^jw$, we arrive at
\begin{multline*}
\int_\mathbb{R} \partial^j_Xw\partial^j_X\partial_Tw 
+\nu^{-\alpha}\int_\mathbb{R}\partial^j_Xw|\partial_X|^\alpha\partial^{j+1}_Xw\\
+\nu^{-\alpha}\int_\mathbb{R}\partial^j_Xw|\partial_X|^\alpha\partial^{j+1}_Xu^{(0)} 
+\int_\mathbb{R}\partial^j_Xw\partial^{j+1}_X(u^{(0)}w)+\frac12\int_\mathbb{R}\partial^j_Xw\partial^{j+1}_Xw^2=0.
\end{multline*}
The second term on the left side vanishes by a symmetry argument. A straightforward calculation then reveals that 
\begin{multline*}
\frac12\frac{d}{dT}\|w(\cdot,T)\|_{H^k(\mathbb{R})}^2 \leq 
\nu^{-\alpha}\|u^{(0)}(\cdot,T)\|_{H^{k+2}(\mathbb{R})}\|w(\cdot,T)\|_{H^k(\mathbb{R})} \\
+C\|u^{(0)}(\cdot,T)\|_{H^{k+2}(\mathbb{R})}\|w(\cdot,T)\|_{H^k}^2+C\|w(\cdot,T)\|_{H^k(\mathbb{R})}^3
\end{multline*}
for any $-T_*+0^+<T<T_*-0^+$.

We assume a priori that $\|w(\cdot,T)\|_{H^k(\mathbb{R})}\leq 1$ for any $-T_*+0^+<T<T_*-0^+$. We use \eqref{E:v0Hk} to show that
\[
\frac{d}{dT}\|w(\cdot,T)\|_{H^k(\mathbb{R})}\leq 
\nu^{-\alpha}\frac{C}{(1+u_0'(x_*)T)^{k+3}}+\frac{C}{(1+u_0'(x_*)T)^{k+3}}\|w(\cdot,T)\|_{H^k(\mathbb{R})}
\]
for any $-T_*+0^+<T<T_*-0^+$. It then follows from Gronwall's lemma that 
\[
\|w(\cdot,T)\|_{H^k(\mathbb{R})}\leq C\nu^{-\alpha}\exp\Big(\frac{C}{(1+u_0'(x_*)T)^C}\Big)\leq C\nu^{-\alpha},
\]
provided that $-T_*+0^+<T<T_*-0^+$ and
\[
\nu^{-\alpha/2}\exp\Big(\frac{C}{(1+u_0'(x_*)T)^C}\Big)\leq 1,
\]
or, equivalently,
\[
T \leq T_*\Big(1-\Big(\frac{C}{|\log\nu|}\Big)^{1/C}\Big),
\]
which tends to $T_*$ as $\nu\to0$.
Note that we recover the a priori assumption $\|w(\cdot,T)\|_{H^k(\mathbb{R})}\leq 1$,
and we may remove it by the usual continuity argument. This completes the proof.
\end{proof}

We merely pause to remark that in \cite{CCT-R} for the nonlinear Schr\"odinger equations, the Cauchy problem is globally well-posed in a certain Sobolev space, and the equation at the zero dispersion limit admits an explicit solution, which grows like $T^s$ in $H^s(\mathbb{R})$ for any $s>0$. 

\

We now use Corollary~\ref{cor:Burgers norm} and Lemma~\ref{lem:energy} to prove Theorem~\ref{thm:R}.

Let $-1\leq \alpha<1/3$. Assume that $5/6<s<1/2-\alpha=:s_c$ and $u_0$ is a nonzero but arbitrary Schwartz function. For $\lambda, \nu>0$, let 
\begin{equation}\label{def:u^ln}
u^{(\lambda, \nu)}(x,t):=\lambda^\alpha u^{(\nu)}(\lambda x/\nu,\lambda^{\alpha+1}t/\nu),
\end{equation}
where $u^{(\nu)}$ solves \eqref{E:v} and \eqref{def:un}. It is straightforward to verify that $u^{(\lambda,\nu)}$ solves \eqref{E:main} and
\[
u^{(\lambda, \nu)}(x,0)=\lambda^\alpha u_0(\lambda x/\nu).
\]
For $\epsilon>0$ sufficiently small, we shall show that 
\begin{equation}\label{E:claim}
\|u^{(\lambda,\nu)}(\cdot,0)\|_{H^s(\mathbb{R})}\lesssim\epsilon\quad\text{but}\quad
\|u^{(\lambda,\nu)}(\cdot,\nu T/\lambda^{\alpha+1})\|_{H^s(\mathbb{R})}\gtrsim\epsilon^{-1}
\end{equation}
for some $0<\nu\leq \lambda \ll 1$ and $T\sim T_*$. 

\

We begin by calculating 
\begin{align*}
\|u^{(\lambda,\nu)}(\cdot,0)\|_{H^s(\mathbb{R})}^2
=&\lambda^{2\alpha}(\nu/\lambda)^2\int_\mathbb{R}(1+|\xi|^2)^s|\widehat{u_0}(\nu\xi/\lambda)|^2~d\xi \\
=&\lambda^{2\alpha}(\nu/\lambda)\int_\mathbb{R}(1+|\lambda\eta/\nu|^2)^s|\widehat{u_0}(\eta)|^2~d\eta \\
\sim&\lambda^{2\alpha}(\nu/\lambda)^{1-2s}
\int_{|\eta|\geq \nu/\lambda}|\eta|^{2s}|\widehat{u_0}(\eta)|^2~d\eta
+\lambda^{2\alpha}(\nu/\lambda)\int_{|\eta|\leq \nu/\lambda}|\widehat{u_0}(\eta)|^2~d\eta \\
=&\lambda^{2\alpha}(\nu/\lambda)^{1-2s}\int_\mathbb{R}|\eta|^{2s}|\widehat{u_0}(\eta)|^2~d\eta \\
&-\lambda^{2\alpha}(\nu/\lambda)^{1-2s}\int_{|\eta|\leq\nu/\lambda}
|\widehat{u_0}(\eta)|^2((\nu/\lambda)^{2s}-|\eta|^{2s})~d\eta\\
=&c\lambda^{2\alpha}(\nu/\lambda)^{1-2s}(1+O(\nu/\lambda)^{1+1/2})
\end{align*}
for some constant $c>0$. Therefore, for $0<\nu\leq \lambda$,
\[
\|u^{(\lambda,\nu)}(\cdot,0)\|_{H^s(\mathbb{R})}\leq C\lambda^\alpha(\nu/\lambda)^{1/2-s}
=C\lambda^{s-s_c}\nu^{1/2-s}.
\]
Let 
\[
\lambda^{s-s_c}\nu^{1/2-s}=\epsilon,
\] 
or, equivalently, 
\[
\nu=c\lambda^{(s_c-s)/(1/2-s)}
\] 
for some $c>0$. Note that $(s_c-s)/(1/2-s)>1$. Therefore, $0\leq\nu\leq\lambda$ as $\lambda\to 0$. This proves the former inequality of \eqref{E:claim}.

To proceed, we calculate 
\begin{align*}
\|u^{(\lambda,\nu)}(\cdot,\nu T/\lambda^{\alpha+1})\|_{H^s(\mathbb{R})}^2
=&\lambda^{2\alpha}(\nu/\lambda)^2\int_\mathbb{R}(1+|\xi|^2)^{2s}|(\widehat{u^{(\nu)}(\cdot,T)})(\nu\xi/\lambda)|^2~d\xi \\
=&\lambda^{2\alpha}(\nu/\lambda)
\int_\mathbb{R}(1+|\lambda\eta/\nu|^2)^{2s}|(\widehat{u^{(\nu)}(\cdot,T)})(\eta)|^2~d\eta \\
\geq&\lambda^{2\alpha}(\nu/\lambda)^{1-2s}
\int_{|\eta|\geq1}|\eta|^{2s}|(\widehat{u^{(\nu)}(\cdot,T)})(\eta)|^2~d\eta\\
\geq&\lambda^{2\alpha}(\nu/\lambda)^{1-2s}
(c\|u^{(\nu)}(\cdot,T)\|_{H^s(\mathbb{R})}^2-C\|u^{(\nu)}(\cdot,T)\|_{L^2(\mathbb{R})}^2).
\end{align*}
Here the first equality uses \eqref{def:u^ln} and the last inequality uses the definition of $H^s(\mathbb{R})$ and $\dot{H}^s(\mathbb{R})$.

Note from \eqref{E:conservation} that 
\[
\|u^{(\nu)}(\cdot,T)\|_{H^s(\mathbb{R})}\geq \|u^{(\nu)}(\cdot,T)\|_{L^2(\mathbb{R})}=\|u^{(\nu)}(\cdot,0)\|_{L^2(\mathbb{R})}
\]
for any $0<T<T_*-0^+$. On the other hand,  \eqref{E:blowup-v} implies 
\[
\|u^{(\nu)}(\cdot, T)\|_{H^s(\mathbb{R})}\gtrsim (T_*-T)^{5/4-3s/2}
\] 
as $T\to T_*-$. Therefore,
\begin{align*}
\|u^{(\lambda,\nu)}(\cdot,\nu T/\lambda^{\alpha+1})\|_{H^s(\mathbb{R})}
\geq &\lambda^\alpha(\nu/\lambda)^{1/2-s}\|v^{(\nu)}(\cdot,T)\|_{H^s(\mathbb{R})}\\
\gtrsim &c\epsilon (T-T_*)^{5/4-3s/2}
\end{align*}
as $T\to T_*-$.
The latter inequality of \eqref{E:blowup-v} then follows 
upon choosing $T$ sufficiently close to $T_*$ depending on $\epsilon$, 
and choosing $\nu$ and, hence, $\lambda$ sufficiently small depending on $\epsilon$ and $T$, so that $\nu T/\lambda^{\alpha+1}<\epsilon$ and $(T-T_*)^{5/4-3s/2}>\epsilon^{-2}$. This completes the proof.

\section{Proof of Theorem~\ref{thm:T}}\label{sec:T}

Let $-1\leq\alpha<2$, and assume that $s<-2$. For $\epsilon>0$ sufficiently small and for $n\in\mathbb{N}$ sufficiently large, to be determined in the course of the proof, let
\begin{equation}\label{def:u0T}
u_0(x)=\epsilon n^{-s}(\cos(nx)+\cos((n+1)x).
\end{equation}
Note that $u_0$ is $2\pi$ periodic, smooth, and of mean zero, whence $u_0\in\dot{H}^r(\mathbb{T})$ for any $r\in\mathbb{R}$. A straightforward calculation reveals that 
\[
\|u_0\|_{\dot{H}^r(\mathbb{T})}\sim\epsilon n^{-s+r}\qquad\text{for any $r\in\mathbb{R}$.}
\]
In particular,
\begin{equation}\label{E:u0sT}
\|u_0\|_{\dot{H}^s(\mathbb{T})}\sim \epsilon.
\end{equation}
(But $\|u_0\|_{L^2(\mathbb{T})}$ may be large.)

Recall from the well-posedness theory (see \cite{Kato}, for instance) that a unique solution of \eqref{E:main}-\eqref{def:u0} exists in $C((-t_*,t_*); H^{s_*}(\mathbb{T}))$ for some $t_*>0$, provided that $s_*>3/2$. Let $t_*$ be the maximal time of existence. It follows from the well-posedness theory that 
\[
t_*\gtrsim \|u_0\|_{H^{3/2+0^+}(\mathbb{T})}^{-1}\sim \epsilon^{-1}n^{s-(3/2+0^+)}.
\] 
Since $u_0\in H^\infty(\mathbb{T})$, moreover, it follows from the well-posedness theory that $u\in C((-t_*,t_*); H^\infty(\mathbb{T}))$. Since $u_0$ is of mean zero, it follows from \eqref{E:conservation} that so is $u$ throughout the interval $(-t_*,t_*)$. Therefore, $u\in C((-t_*,t_*); \dot{H}^r(\mathbb{T}))$ for any $r\in\mathbb{R}$. 

Let 
\[
\widehat{S(t)f}(k)=e^{-i|k|^\alpha kt}\widehat{f}(k)\qquad\text{for $k\in\mathbb{Z}$},
\]
and let 
\begin{align}\label{def:u1T}
u_1(x,t)=&S(t)u_0(x) \\
=&\epsilon n^{-s}(\cos(nx-n^{\alpha+1}t)+\cos((n+1)x-(n+1)^{\alpha+1}t)). \notag
\end{align}
Note that $u_1$ solves
\begin{equation}\label{E:u1}
\partial_tu_1+|\partial_x|^\alpha\partial_xu_1=0\quad\text{and}\quad u_1(x,0)=u_0(x).
\end{equation}
In other words, $u_1$ solves the linear part of \eqref{E:main}-\eqref{def:u0}. 
Note that $u_1$ is $2\pi$ periodic, smooth, and of mean zero at any time. A straightforward calculation reveals that
\begin{equation}\label{E:u1rT}
\|u_1(\cdot,t)\|_{\dot{H}^r(\mathbb{T})}\sim\epsilon n^{-s+r}
\qquad\text{for any $t\in\mathbb{R}$}\quad\text{for any $r\in\mathbb{R}$}.
\end{equation} 
In particular, $\|u_1(\cdot,t)\|_{\dot{H}^s(\mathbb{T})}\sim\epsilon$ remains small for any $t\in\mathbb{R}$.

To proceed, let 
\[
u_2(x,t)=-\int^t_0 S(t-\tau)(u_1\partial_xu_1)(x,\tau)~d\tau.
\]
Note that $u_2$ solves
\begin{equation}\label{E:u2}
\partial_tu_2+|\partial_x|^\alpha\partial_xu_2+u_1\partial_xu_1=0\quad\text{and}\quad u_2(x,0)=0.
\end{equation}
As a matter of fact, $u_2$ approximates the solution of \eqref{E:main}-\eqref{def:u0} during some time interval. 
Note that $u_2$ is $2\pi$ periodic, smooth and of mean zero for any time. 
A straightforward calculation reveals that 
\begin{align*}
u_2(x,t)=
-&\frac12\frac{1}{2^\alpha-1}\epsilon^2n^{-2s-\alpha}
\,\sin((2^\alpha-1)n^{\alpha+1}t)\,\sin(2nx-(2^\alpha+1)n^{\alpha+1}t) \\
-&\frac12\frac{1}{2^\alpha-1}\epsilon^2n^{-2s}(n+1)^{-\alpha} \\
&\times\sin((2^\alpha-1)(n+1)^{\alpha+1}t)\,\sin(2(n+1)x-(2^\alpha+1)(n+1)^{\alpha+1}t) \\
-&\epsilon^2n^{-2s}\frac{1}{(n+1)^{\alpha+1}-n^{\alpha+1}-1} \\
&\times\sin\big(\tfrac12((n+1)^{\alpha+1}-n^{\alpha+1}-1)t\big)
\,\sin\big(x-\tfrac12((n+1)^{\alpha+1}-n^{\alpha+1}+1)t\big)\\
-&\epsilon^2n^{-2s}\frac{2n+1}{(2n+1)^{\alpha+1}-(n+1)^{\alpha+1}-n^{\alpha+1}} \\
&\times\sin\big(\tfrac12((2n+1)^{\alpha}-(n+1)^{\alpha+1}-n^{\alpha+1})t\big)\\
&\times\sin\big((2n+1)x-\tfrac12((2n+1)^{\alpha}+(n+1)^{\alpha+1}+n^{\alpha+1})t\big) \\
\sim-&\frac12\epsilon^2n^{-2s+1}t\,\sin(2nx-(2^{\alpha+1}n^{\alpha+1}t) \\
-&\frac12\epsilon^2n^{-2s}(n+1)t\,\sin(2(n+1)x-(2^{\alpha+1}(n+1)^{\alpha+1}t) \\
-&\frac12\epsilon^2n^{-2s}t\,\sin\big(x-\tfrac12((n+1)^{\alpha+1}-n^{\alpha+1}+1)t\big)\\
-&\frac12\epsilon^2n^{-2s}(2n+1)t\,\sin\big((2n+1)x-\tfrac12((2n+1)^{\alpha}+(n+1)^{\alpha+1}+n^{\alpha+1})t\big) 
\end{align*}
for any $x\in\mathbb{T}$, provided that $0<t\ll n^{-\alpha-1}<1$. Note that the first, the second, and the last terms above have the amplitude of the size $\epsilon^2n^{-2s+1}t$ and the frequency of the size $n$, whereas the third term has the amplitude of the size of $\epsilon^2n^{-2s}t$ and the frequency $1$. In other words, the nonlinear interaction of two adjacent, high frequency modes drives oscillation of a low frequency mode. Consequently, 
\begin{equation}\label{E:u2rT}
\|u_2(\cdot,t)\|_{\dot{H}^r(\mathbb{T})}\sim 
\begin{cases} 
\epsilon^2n^{-2s}t \quad & \text{if $r<-1$}, \\
\epsilon^2n^{-2s+1+r}t & \text{if $r\geq -1$}
\end{cases}
\end{equation}
for any $0<t\ll n^{-\alpha-1}$. We wish to show that for $n$ sufficiently large, $u_2$ becomes large in $\dot{H}^s(\mathbb{T})$ after a short time. As a matter of fact, note that $n^{7s/4-1/2}<n^{-\alpha-1}\ll 1$, by hypothesis, and
\begin{equation}\label{E:u2sT}
\|u_2(\cdot,n^{7s/4-1/2})\|_{\dot{H}^s(\mathbb{T})}\sim\epsilon^2n^{-s/4-1/2}.
\end{equation}
Note that $-s/4-1/2>0$. We may choose $n$ sufficiently large so that $n^{7s/4-1/2}<\epsilon$ and 
\begin{equation}\label{E:n}
\epsilon^2n^{-s/4-1/2}>2\epsilon^{-1}.
\end{equation}

To continue, let 
\[
u=u_1+u_2+w.
\]
Since $u$, and $u_1$, $u_2$ are $2\pi$ periodic, smooth, and of mean zero throughout the interval $(-t_*,t_*)$, so is $w$. Note that $n^{7s/4-1/2}<n^{s-(3/2+0^+)}\lesssim t_*$. We shall show that 
\[
\|w(\cdot,n^{7s/4-1/2})\|_{\dot{H}^0(\mathbb{T})}=\|w(\cdot,n^{7s/4-1/2})\|_{L^2(\mathbb{T})}
\] 
is small. Indeed, $w$ is of mean zero. Consequently,  
\[
\|w(\cdot,n^{7s/4-1/2})\|_{\dot{H}^s(\mathbb{T})}<\|w(\cdot,n^{7s/4-1/2})\|_{\dot{H}^0(\mathbb{T})}
\] 
is small. Note from \eqref{E:main} and \eqref{E:u1}, \eqref{E:u2} that $w$ solves
\[
\partial_tw+|\partial_x|^\alpha\partial_xw+w\partial_xw+\partial_x((u_1+u_2)w)+\partial_x(u_1u_2)+u_2\partial_xu_2=0
\]
and $w(x,0)=0$. Integrating this over $\mathbb{T}$ against $w$, we make an explicit calculation to arrive at
\begin{align*}
\frac12\frac{d}{dt}\|w(\cdot,t)\|_{L^2(\mathbb{T})}^2\leq &
\|\partial_x(u_1+u_2)(\cdot,t)\|_{L^\infty(\mathbb{T})}\|w(\cdot,t)\|_{L^2(\mathbb{T})}^2 \\
&+\|(\partial_x(u_1u_2)+u_2\partial_xu_2)(\cdot,t)\|_{L^2(\mathbb{T})}\|w(\cdot,t)\|_{L^2(\mathbb{T})}
\end{align*}
for any $t\in(-t_*,t_*)$. For $0\leq t\ll n^{s-1}<n^{-\alpha-1}$ so that $n^{-s+1}t\ll1$, note from \eqref{E:u1rT} and \eqref{E:u2rT} that
\begin{align*}
&\|\partial_x(u_1+u_2)(\cdot,t)\|_{L^\infty(\mathbb{T})}\sim 
\epsilon n^{-s+1}+\epsilon^2n^{-2s+2}t\sim \epsilon n^{-s+1}
\intertext{and, similarly,}
&\|(\partial_x(u_1u_2)+u_2\partial_xu_2)(\cdot,t)\|_{L^2(\mathbb{T})}\sim \epsilon^3n^{-3s+2}t.
\end{align*}
It then follows from Gronwall's lemma that
\[
\|w(\cdot,t)\|_{L^2(\mathbb{T})}\lesssim \epsilon^3n^{-3s+2}t^2\exp(\epsilon n^{-s+1}t)
\sim\epsilon^3n^{-3s+2}t^2
\]
for $0\leq t\ll n^{s-1}$, provided that $\epsilon>0$ is sufficiently small. Note that $7s/4-1/2<s-1$ for $s<-2$. Therefore,
\begin{equation}\label{E:w0T}
\|w(\cdot,n^{7s/4-1/2})\|_{L^2(\mathbb{T})}\lesssim \epsilon^3n^{-3s+2}n^{7s/2-1}=\epsilon^3n^{s/2+1}<\epsilon^3.
\end{equation}
 
At last, it follows from the triangle inequality that
\begin{align*}
\|u(\cdot,n^{7s/4-1/2})\|_{\dot{H}^s(\mathbb{T})} 
\geq&\|u_2(\cdot,n^{7s/4-1/2})\|_{\dot{H}^s(\mathbb{T})} \\
&-\|u_1(\cdot,n^{7s/4-1/2})\|_{\dot{H}^s(\mathbb{T})}
-\|w(\cdot,n^{7s/4-1/2})\|_{\dot{H}^s(\mathbb{T})} \\
>&\|u_2(\cdot,n^{7s/4-1/2})\|_{\dot{H}^s(\mathbb{T})} \\
&-\|u_1(\cdot,n^{7s/4-1/2})\|_{\dot{H}^s(\mathbb{T})}
-\|w(\cdot,n^{7s/4-1/2})\|_{L^2(\mathbb{T})} \\
\gtrsim &\epsilon^2n^{-s/4-1/2}-\epsilon-\epsilon^3> \epsilon^{-1},
\end{align*}
provided that $\epsilon>0$ is sufficiently small. This completes the proof. Here, the second inequality uses that $w$ is of mean zero and $s<-2$, the third inequality uses \eqref{E:u2sT}, \eqref{E:u1rT}, and \eqref{E:w0T}, and the last inequality uses \eqref{E:n}.

\subsection*{Acknowledgements}
The author wishes to thank Mimi Dai and Nikos Tzirakis for helpful and stimulating discussions. She is supported by the National Science Foundation under the grant CAREER DMS-1352597, an Alfred P. Sloan Research Fellowship, a Simons Fellowship in Mathematics, and by the University of Illinois at Urbana-Champaign under the Arnold O. Beckman Research Awards RB14100 and RB16227. She is grateful to the Mathematics Department at Brown University for its generous hospitality. 

\bibliographystyle{amsalpha}
\bibliography{fKdV}

\end{document}